\newtheorem{theorem}{Theorem}
\newtheorem{lemma}[theorem]{Lemma}
\newtheorem{proposition}[theorem]{Proposition}
\newtheorem{corollary}[theorem]{Corollary}
\newtheorem{algorithm}[theorem]{Algorithm}
\title{Directional descent}
\author{Andrew J. Young}
\date{}
\begin{document}

\maketitle

\begin{abstract}
We identity the optimal non-infinitesimal direction of descent for a convex function.
An algorithm is developed that can theoretically minimize a subset of (non-convex) functions.
\end{abstract}

\section{Introduction}

Non-smooth convex functions can be minimized using various algorithms \cite{Bertsekas2015} (more efficiently with additional structure \cite{Nesterov2005,Bertsekas2015}).
Non-convex optimization problems are in general NP-hard but, with some restrictions, e.g. strong smoothness properties and requiring only $\varepsilon$-stationary solutions, there are efficient algorithms \cite{Nesterov2006,Carmon2017,Clement2020} and complexity bounds \cite{Cartis2012,Carmon2020}.
We consider both non-convexity and limited smoothness.

Our algorithm is two-staged: the first stage is finding the minimizer of a constrained optimization problem and the second is an incrementation step.
More specifically, the first stage seeks the minimizer of a function over a closed ball with some positive radius.
The solution of this optimization is the optimal direction of descent for any ball with positive radius (Theorem \ref{theorem_optimal_direction}).
The second stage proceeds from the initial point along the given direction.
If the function is Lipschitz, then given an appropriately close solution in the first stage, e.g. order $\varepsilon$, the second stage provides an $\varepsilon$-optimal solution with iteration complexity $O(1/\varepsilon)$.

This algorithm can be applied to any function that is lower semicontinuous with a bounded effective domain and unique minimizer whose points of convexity in its effective domain have a nonempty interior.

We begin with some auxiliary results related to lower convex envelopes.

\section{Lower convex envelope}

Let $X$ be a normed vector space and $f : X \rightarrow (-\infty,\infty]$.
The lower convex envelope of $f$ is the pointwise supremum of all convex functions lower bounding $f$, i.e.
\begin{equation*}
\sup \{ h(x) : h \le f \text{ convex} \},
\end{equation*}
and, as the pointwise supremum of an arbitrary collection of convex functions is convex, it is convex.
In the sequel, the lower convex envelope of a function $f$ is denoted by $\breve{f}$.

We give an explicit characterization of the lower convex envelope.
This representation is well known and can be found in \cite{Rockafellar1970} on page 36.

\begin{lemma}
\label{lemma_lower_convex_envelope}
The lower convex envelope of $f$ is
\begin{equation*}
g(x) = \inf \left \{ \sum_{k=1}^{m} \lambda_{k} f(x_{k}) : (\underline{\lambda},\underline{x}) \in C(x,f) \right\},
\end{equation*}
where 
\begin{equation*}
C(x,f)
:= \left\{ (\underline{\lambda},\underline{x}) : \sum_{k=1}^{m} \lambda_{k} x_{k} = x, m \in \mathbb{N}, \lambda_{k} \ge 0, \sum_{k=1}^{m} \lambda_{k} \! = \! 1, x_{k} \in \mathrm{dom}(f) \right\},
\end{equation*}
$\mathrm{dom}(f)$ is the effective domain of $f$ and the infimum of the empty set is $\infty$.
\end{lemma}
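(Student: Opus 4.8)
The plan is to verify directly that the function $g$ in the statement coincides with the lower convex envelope $\breve{f}$, by establishing the two inequalities $\breve{f} \le g$ and $g \le \breve{f}$ separately. The first follows from Jensen's inequality applied to an arbitrary convex minorant, and the second from checking that $g$ is itself a convex minorant of $f$.

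For $\breve{f} \le g$, I would fix any convex $h \le f$ and any point $x$. For every representation $(\underline{\lambda},\underline{x}) \in C(x,f)$, finite convexity of $h$ gives
\[
h(x) = h\!\left( \sum_{k=1}^{m} \lambda_{k} x_{k} \right) \le \sum_{k=1}^{m} \lambda_{k} h(x_{k}) \le \sum_{k=1}^{m} \lambda_{k} f(x_{k}),
\]
where the last inequality uses $h \le f$ together with $x_{k} \in \mathrm{dom}(f)$. Taking the infimum over $C(x,f)$ yields $h(x) \le g(x)$, and since $h$ ranges over all convex minorants, passing to the supremum gives $\breve{f}(x) \le g(x)$.

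For $g \le \breve{f}$, it suffices to show that $g$ is convex and $g \le f$, since $\breve{f}$ is the supremum of all convex minorants and $g$ would then be one of them. The bound $g \le f$ comes from the single-point representation $m = 1$, $\lambda_{1} = 1$, $x_{1} = x$ when $x \in \mathrm{dom}(f)$, and is immediate otherwise since then $f(x) = \infty$. For convexity, fix $x,y$ and $\alpha \in [0,1]$ and treat the finite case (the cases involving $+\infty$ being trivial): for each $\varepsilon > 0$ choose representations $(\underline{\lambda},\underline{x}) \in C(x,f)$ and $(\underline{\mu},\underline{y}) \in C(y,f)$ with $\sum_{k} \lambda_{k} f(x_{k}) \le g(x) + \varepsilon$ and $\sum_{j} \mu_{j} f(y_{j}) \le g(y) + \varepsilon$. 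The concatenated family carrying weights $\alpha \lambda_{k}$ on the $x_{k}$ and $(1-\alpha)\mu_{j}$ on the $y_{j}$ has weights summing to one and barycentre $\alpha x + (1-\alpha) y$, hence lies in $C(\alpha x + (1-\alpha) y, f)$; evaluating the defining sum on it and letting $\varepsilon \to 0$ delivers $g(\alpha x + (1-\alpha) y) \le \alpha g(x) + (1-\alpha) g(y)$.

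The main obstacle is this convexity step: the merging of two near-optimal representations into a single admissible representation for the convex combination is where the structure of $C(x,f)$ is genuinely used, and it is also where care is needed with the extended-real conventions and with the $\varepsilon$-approximation, since the defining infimum need not be attained (and $g$ may take the value $-\infty$, in which case one selects representations with value below $-1/\varepsilon$ rather than within $\varepsilon$ of the infimum).
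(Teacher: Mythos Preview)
Your proof is correct and follows essentially the same strategy as the paper's: both show that any convex minorant $h \le f$ satisfies $h \le g$ via Jensen's inequality, that $g \le f$ via the one-term representation, and that $g$ is convex by concatenating representations of $x$ and $y$ into one for $\alpha x + (1-\alpha)y$. Your treatment is in fact slightly more careful, as you explicitly handle the $\varepsilon$-approximation and the possibility $g = -\infty$, which the paper leaves implicit.
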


$f$ is \emph{convex at a point} $x$, if, for all $(\underline{\lambda},\underline{x}) \in C(x,f)$,
\begin{equation*}
f(x) \le \sum \lambda_{k} f(x_{k}).
\end{equation*}
This definition includes points $x$ outside $\mathrm{conv}(\mathrm{dom}(f))$ which have $f$ infinite, empty $C(x,f)$ and are thus, trivially, convex.
Moreover, If $x$ is an extreme point of $\mathrm{conv}(\mathrm{dom}(f))$ then $C(x,f) = \{(1,x)\}$ and thus $x$ is a point of convexity for $f$.
Let $A_{f}$ be the \emph{set of points where $f$ is convex}.
We have the following immediate corollary to Lemma \ref{lemma_lower_convex_envelope}.

\begin{corollary}
\label{corollary_lce_points}
\begin{equation*}
A_{f} = \{ x : f(x) = \breve{f}(x) \}.
\end{equation*}
\end{corollary}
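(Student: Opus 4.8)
The plan is to prove the set equality
\begin{equation*}
A_{f} = \{ x : f(x) = \breve{f}(x) \}
\end{equation*}
by double inclusion, leaning directly on the explicit formula for $\breve{f}$ furnished by Lemma \ref{lemma_lower_convex_envelope} and on the definition of convexity at a point. Throughout I would write $g$ for the function displayed in Lemma \ref{lemma_lower_convex_envelope}, so that $\breve{f} = g$, and recall that for any $x$,
\begin{equation*}
\breve{f}(x) = \inf \left\{ \sum_{k=1}^{m} \lambda_{k} f(x_{k}) : (\underline{\lambda},\underline{x}) \in C(x,f) \right\}.
\end{equation*}

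First I would observe that since $f$ is convex lower-bounding itself is not what we want; rather $\breve{f} \le f$ always, because taking the trivial representation $m=1$, $\lambda_{1}=1$, $x_{1}=x$ (valid whenever $x \in \mathrm{dom}(f)$) shows that $f(x)$ is one of the values in the infimum defining $\breve{f}(x)$, so $\breve{f}(x) \le f(x)$; and when $x \notin \mathrm{dom}(f)$ the trivial representation is still admissible, giving the same bound, while points outside $\mathrm{conv}(\mathrm{dom}(f))$ have $f(x) = \infty = \breve{f}(x)$. For the inclusion $A_{f} \subseteq \{x : f(x) = \breve{f}(x)\}$, I would take $x \in A_{f}$: by definition $f(x) \le \sum_{k} \lambda_{k} f(x_{k})$ for every $(\underline{\lambda},\underline{x}) \in C(x,f)$, so $f(x)$ is a lower bound for the set over which the infimum is taken, whence $f(x) \le \breve{f}(x)$; combined with the universal bound $\breve{f}(x) \le f(x)$ this yields $f(x) = \breve{f}(x)$. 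The trivially convex points outside $\mathrm{conv}(\mathrm{dom}(f))$ are handled by the $\infty = \infty$ remark already noted.

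For the reverse inclusion $\{x : f(x) = \breve{f}(x)\} \subseteq A_{f}$, I would take $x$ with $f(x) = \breve{f}(x)$ and an arbitrary $(\underline{\lambda},\underline{x}) \in C(x,f)$; since $\sum_{k} \lambda_{k} f(x_{k})$ is one of the competitors in the infimum defining $\breve{f}(x)$, I have $\breve{f}(x) \le \sum_{k} \lambda_{k} f(x_{k})$, and substituting $f(x) = \breve{f}(x)$ gives $f(x) \le \sum_{k} \lambda_{k} f(x_{k})$, which is exactly the condition for $f$ to be convex at $x$. If instead $C(x,f)$ is empty, convexity at $x$ holds vacuously, so $x \in A_{f}$ regardless.

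Both inclusions are essentially unwinding the definition of the infimum against the definition of convexity at a point, so I do not anticipate a serious obstacle. The one point requiring care is the bookkeeping at points where $f$ takes the value $\infty$, specifically distinguishing $x \in \mathrm{conv}(\mathrm{dom}(f))$ from $x$ outside it and confirming that the empty-$C(x,f)$ case is consistent on both sides; the paper's own remark that such points are trivially convex with $f(x) = \infty$ makes this routine, and I would state it explicitly rather than glossing over it so that the equality is genuinely verified for all $x \in X$, not merely on $\mathrm{dom}(f)$.
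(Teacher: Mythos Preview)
Your argument is correct and is exactly the unwinding of definitions the paper has in mind when it states the corollary without proof as an immediate consequence of Lemma~\ref{lemma_lower_convex_envelope}. One small slip: when $x \notin \mathrm{dom}(f)$ the trivial representation $(1,x)$ is \emph{not} admissible in $C(x,f)$, since the definition requires each $x_k \in \mathrm{dom}(f)$; this does no harm, however, because then $f(x)=\infty$ and the inequality $\breve{f}(x) \le f(x)$ holds trivially.
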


\section{Euclidean space}

We now specialize to the case where $X=\mathbb{R}^{n}$ is $n$-dimensional Euclidean space endowed with the Euclidean norm and $f : \mathbb{R}^{n} \rightarrow (-\infty,\infty]$.

\subsection{Sufficiency of the lower convex envelope}

Suppose $f$ is not convex but has a \emph{unique} minimizer.
Any minimizer of $f$ is a minimizer of the lower convex envelope $\breve{f}$, but the converse is not necessarily true.
We give sufficient conditions for $\breve{f}$ to have the same unique minimizer.
Necessity of these conditions is also discussed.

\begin{theorem}
If $f$ is lower semicontinuous with a bounded effective domain and unique minimizer, then the lower convex envelope of $f$ has the same unique minimizer.
\end{theorem}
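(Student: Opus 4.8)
The plan is to first pin down the common minimum value and then settle uniqueness through a compactness argument applied to the representation of $\breve{f}$ from Lemma~\ref{lemma_lower_convex_envelope}. Write $m := \inf f = f(x^{*})$, where $x^{*}$ is the unique minimizer of $f$; since a minimizer exists, $m$ is finite. The constant function $x \mapsto m$ is convex and lower bounds $f$, so by definition of the envelope $\breve{f} \ge m$ everywhere, and since $\breve{f}$ is a supremum of functions dominated by $f$ we also have $\breve{f}(x^{*}) \le f(x^{*}) = m$. Together these give $\breve{f}(x^{*}) = m$ and $\min \breve{f} = m$, so $x^{*}$ is a minimizer of $\breve{f}$. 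It then remains to show no other point attains the value $m$.

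To this end I would suppose $\breve{f}(y) = m$ and invoke Carath\'eodory's theorem (in its sharpened form for convex envelopes) to restrict the representing tuples in $C(y,f)$ to at most $n+1$ points, so that $\breve{f}(y) = \inf \{ \sum_{k=1}^{n+1} \lambda_{k} f(x_{k}) \}$ over weights $\underline{\lambda}$ in the unit simplex and points $x_{k} \in \mathrm{dom}(f)$. Fixing the number of points in advance is exactly what enables the limiting step. Taking a minimizing sequence $(\underline{\lambda}^{(j)}, \underline{x}^{(j)})$, the weights lie in a compact simplex and, because $\mathrm{dom}(f)$ is bounded, the points lie in the compact set $\overline{\mathrm{dom}(f)}$; passing to a subsequence yields limits $\lambda_{k}^{(j)} \to \lambda_{k}^{*}$ and $x_{k}^{(j)} \to x_{k}^{*}$, with $\sum_{k} \lambda_{k}^{*} x_{k}^{*} = y$ by continuity of the affine constraint.

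The crux is the behaviour of $\sum_{k} \lambda_{k} f(x_{k})$ in the limit, and here lower semicontinuity is decisive. Since $f \ge m$, every summand satisfies $\lambda_{k}^{(j)}(f(x_{k}^{(j)}) - m) \ge 0$, while the full sum tends to $m$; hence each summand tends to $0$. For an index with $\lambda_{k}^{*} > 0$ this forces $f(x_{k}^{(j)}) \to m$, and lower semicontinuity then gives $f(x_{k}^{*}) \le m$, so $f(x_{k}^{*}) = m$ (in particular $x_{k}^{*} \in \mathrm{dom}(f)$). The anticipated difficulty lies precisely in this passage at the boundary of $\mathrm{dom}(f)$: weights may degenerate to zero while the corresponding values $f(x_{k}^{(j)})$ blow up, so the nonnegativity of the summands coming from $f \ge m$, together with the fixed finite number of terms supplied by Carath\'eodory, is what makes the term-by-term argument legitimate.

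Finally I would invoke uniqueness of the minimizer of $f$: every $x_{k}^{*}$ carrying positive weight satisfies $f(x_{k}^{*}) = m$ and therefore equals $x^{*}$, whence $y = \sum_{k} \lambda_{k}^{*} x_{k}^{*} = x^{*}$. This contradicts $y \ne x^{*}$ and establishes that $x^{*}$ is the unique minimizer of $\breve{f}$, as claimed.
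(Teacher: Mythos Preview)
Your proof is correct and follows essentially the same strategy as the paper: reduce to $(n+1)$-term convex combinations via Carath\'eodory, use compactness (from the bounded domain) together with lower semicontinuity to control a minimizing tuple, and then invoke uniqueness of the minimizer of $f$. The only execution difference is that the paper restricts in advance to the compact sublevel set $B_{x}=\{z:f(z)\le f(x)\}\subset\mathrm{dom}(f)$ and applies Weierstrass to show the infimum in Lemma~\ref{lemma_lower_convex_envelope} is actually attained, whereas you carry out the compactness step by hand with a minimizing sequence in $\overline{\mathrm{dom}(f)}$ and pass to the limit term by term.
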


\begin{proof}
Let $x^{\ast}$ be the unique minimizer of $f$, so that $f(x) > f(x^{\ast})$ for all $x \ne x^{\ast}$.
By Caratheodory (Theorem \ref{theorem_caratheodory}) the sets $C(x,f)$ can be restricted to vectors of length $n+1$.
Choose by boundedness any compact set $B$ containing $\mathrm{dom}(f)$.
Let $g(\underline{\lambda},\underline{x}) = \sum_{k=1}^{n+1} \lambda_{k} f(x_{k})$ on $[0,1]^{n+1} \times B^{n+1}$ with the convention that $0 \cdot \infty = 0$.
Then (by properties of $\liminf$) $g$ is lower semicontinuous (lsc).
Let
\begin{equation*}
D_{x}
= \left\{ (\underline{\lambda},\underline{x}) : \sum_{k=1}^{n+1} \lambda_{k} x_{k} = x, \lambda_{k} \ge 0, \sum_{k=1}^{n+1} \lambda_{k} = 1, x_{k} \in B \right\}.
\end{equation*}
Then $D_{x}$ is compact, as $[0,1]^{n+1} \times B^{n+1}$ is compact if $B$ compact, the functions $(\underline{\lambda},\underline{x}) \mapsto \sum_{k=1}^{n+1} \lambda_{k} x_{k}$ and $(\underline{\lambda},\underline{x}) \mapsto \sum_{k=1}^{n+1} \lambda_{k}$ are continuous and the intersection of closed sets, the preimage of $x$ and $1$, respectively, with a compact set is compact.
By construction, the infimum in Lemma \ref{lemma_lower_convex_envelope} can be extended to $D_{x}$ instead of $C(x,f)$.
As in that lemma, if $D_{x}$ is empty $\breve{f}(x)$ is infinite.
The infimum of an lsc function on a nonempty compact set $g : D_{x} \rightarrow (-\infty,\infty]$ is achieve by some $(\underline{\lambda},\underline{x})$.
Then, for any such $(\underline{\lambda},\underline{x})$,
\begin{equation*}
\sum_{k=1}^{n+1} \lambda_{k} f(x_{k}) = f(x^{\ast})
\quad \Longleftrightarrow \quad
\lambda_{k} > 0 \Longrightarrow f(x_{k}) = f(x^{\ast}) \Longrightarrow x_{k} = x^{\ast}.
\end{equation*}
Hence $(\underline{\lambda},\underline{x}) \in D_{x^{\ast}}$.
\end{proof}

Necessity of lower semicontinuity:
\begin{equation*}
f(x) 
=
\begin{cases}
0 & x = 0 \\
x - 1/2 & 1/2 < x \le 1 \\
\infty & \text{else}
\end{cases}.
\end{equation*}

Necessity of bounded effective domain:
\begin{equation*}
f(x) 
=
\begin{cases}
\lvert x \rvert & -1 \le x \le 1 \\
1 & \text{else}
\end{cases}.
\end{equation*}

\subsection{Existence of subgradients}

It is well known that a proper convex function has subgradients at all the interior points of its effective domain (Proposition \ref{proposition_subgradient_existence}).
We explore the connection between points of convexity and existence of subgradients.

\begin{lemma}
\hspace{1mm}
\begin{enumerate}[i)]
\item
If $f$ has a subgradient at $z$, then $f$ is convex at $z$.
\item
If $\breve{f}$ is proper, $z \in \mathrm{int}(\mathrm{dom}(f))$ and $f$ is convex at $z$, then $f$ has a subgradient at $z$.
\end{enumerate}
\end{lemma}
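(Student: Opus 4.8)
The plan is to treat the two parts separately, with part (i) an immediate consequence of the definition of subgradient and part (ii) resting on the identification, via Corollary \ref{corollary_lce_points}, of points of convexity with points where $f$ and $\breve{f}$ agree.

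For part (i), I would suppose $s$ is a subgradient of $f$ at $z$, so that $f(x) \ge f(z) + \langle s, x - z \rangle$ for every $x$. Given any $(\underline{\lambda},\underline{x}) \in C(z,f)$, the move is to sum this inequality against the weights $\lambda_{k}$ at the points $x_{k}$. Because $\sum_{k} \lambda_{k} x_{k} = z$ and $\sum_{k} \lambda_{k} = 1$, the linear part collapses, $\sum_{k} \lambda_{k} \langle s, x_{k} - z \rangle = \langle s, z - z \rangle = 0$, leaving $\sum_{k} \lambda_{k} f(x_{k}) \ge f(z)$, which is exactly the defining inequality for $f$ being convex at $z$. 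Nothing beyond the existence of the subgradient is needed here.

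For part (ii), the first step is to convert the convexity hypothesis into an equality: since $f$ is convex at $z$, Corollary \ref{corollary_lce_points} gives $f(z) = \breve{f}(z)$. The idea is then to produce a subgradient of the convex function $\breve{f}$ at $z$ and transfer it to $f$. To invoke the standard subgradient existence result (Proposition \ref{proposition_subgradient_existence}) for the proper convex function $\breve{f}$, I need $z$ to lie in the interior of $\mathrm{dom}(\breve{f})$; this follows because $\breve{f} \le f$ forces $\mathrm{dom}(f) \subseteq \mathrm{dom}(\breve{f})$, hence $z \in \mathrm{int}(\mathrm{dom}(f)) \subseteq \mathrm{int}(\mathrm{dom}(\breve{f}))$. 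The proposition then supplies an $s$ with $\breve{f}(x) \ge \breve{f}(z) + \langle s, x - z \rangle$ for all $x$. Chaining $f(x) \ge \breve{f}(x)$ with this inequality and using $\breve{f}(z) = f(z)$ yields $f(x) \ge f(z) + \langle s, x - z \rangle$, so $s$ is a subgradient of $f$ at $z$.

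The two genuine subtleties, rather than obstacles, both sit in part (ii) and both concern where the hypotheses enter. First, properness of $\breve{f}$ is what guarantees $\breve{f}(z) > -\infty$ and licenses the application of Proposition \ref{proposition_subgradient_existence}; without it the lower convex envelope could fail to admit a subgradient. Second, the interior assumption $z \in \mathrm{int}(\mathrm{dom}(f))$ is precisely what transfers, via the domain inclusion, to the interiority required for $\breve{f}$. The only step that wants a moment of care is the inclusion of effective domains and its behaviour under taking interiors; everything else reduces to a short chain of inequalities.
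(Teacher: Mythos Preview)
Your proposal is correct and follows essentially the same route as the paper: part (i) by summing the subgradient inequality against the convex weights so the linear term cancels, and part (ii) by applying Proposition~\ref{proposition_subgradient_existence} to $\breve{f}$ at $z$ (using $\mathrm{dom}(f)\subseteq\mathrm{dom}(\breve{f})$), then chaining $f\ge\breve{f}$ with $\breve{f}(z)=f(z)$ from Corollary~\ref{corollary_lce_points}. Your treatment of the domain inclusion is slightly more explicit than the paper's, but the argument is the same.
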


\begin{proof}
$(i)$
If $f$ has a subgradient at $z$, then there exists $g$ such that
\begin{equation*}
f(x) \ge g'(x-z) + f(z),
\end{equation*}
for all $x$.
In particular, if
\begin{equation*}
z = \sum \lambda_{k} z_{k},
\end{equation*}
then, for all $k$,
\begin{equation*}
f(z_{k}) \ge g'(z_{k} - z) + f(z).
\end{equation*}
Summing over $k$
\begin{equation*}
\sum \lambda_{k} f(z_{k})
\ge \sum \lambda_{k} \left[ g'(z_{k} - z) + f(z) \right]
= g'\left( \sum \lambda_{k} z_{k} - z \right) + f(z)
= f(z).
\end{equation*}
$(ii)$
Let $h$ be the lower convex envelope of $f$.
Then by Proposition \ref{proposition_subgradient_existence}, there exists a subgradient $h_{z}$ for all $z \in \mathrm{int}(\mathrm{dom}(h))$, where $\mathrm{dom}(h) = \mathrm{conv}(\mathrm{dom}(f))$, such that
\begin{equation*}
h(x) \ge h_{z}'(x-z) + h(z)
\end{equation*}
for all $x$.
By definition of $h$ and Corollary \ref{corollary_lce_points}, for all such $z$ where $f$ is convex,
\begin{equation*}
f(x) \ge h(x)
\ge h_{z}'(x -z) + h(z)
= h_{z}'(x -z) + f(z) 
\end{equation*}
for all $x$.
\end{proof}

\subsection{Optimal direction}

If $f$ have a unique minimizer $x^{\ast}$, then, for all $x_{0} \ne x^{\ast}$ and $\delta > 0$, there exists $\lVert d \rVert \le \delta$ such that
\begin{equation*}
\inf_{\alpha > 0} f( x_{0} + \alpha d ) = f(x^{\ast}),
\end{equation*}
namely $d = \delta d_{0}$ with
\begin{equation*}
d_{0} := \frac{x^{\ast} - x_{0}}{\lVert x^{\ast} - x_{0} \lVert }.
\end{equation*}
This choice is in fact optimal, a result that forms the theoretical basis for our algorithm.

\begin{theorem}
\label{theorem_optimal_direction}
Let $f$ be convex with a unique minimizer $x^{\ast}$.
For all $x_{0} \ne x^{\ast}$ in $\mathrm{dom}(f)$ and directions $\lVert d \rVert \le \delta \le \lVert x^{\ast} - x_{0} \rVert$
\begin{equation*}
f(x_{0} + d) \ge f(x_{0} + \delta d_{0}).
\end{equation*}
If $d \ne \delta d_{0}$, then this inequality is strict.
\end{theorem}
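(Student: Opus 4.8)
The plan is to read the inequality as the statement that the boundary point $y := x_{0} + \delta d_{0}$ minimizes $f$ over the closed ball $\bar{B}(x_{0},\delta)$, strictly away from $y$. Since $\delta \le \lVert x^{\ast} - x_{0} \rVert$, the point $y$ lies on the segment $[x_{0},x^{\ast}]$, and the outward unit normal to the ball at $y$ is exactly $d_{0}$. I would first translate so that $x_{0}=0$ and set $R := \lVert x^{\ast} \rVert \ge \delta$, so that $y = (\delta/R)x^{\ast}$, the ball sits inside the half-space $\{ z : \langle d_{0}, z \rangle \le \delta \}$, and $\langle d_{0}, x^{\ast} \rangle = R \ge \delta$.

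I would dispatch the axial case first. The restriction $\phi(t) := f(t d_{0})$ is convex on $[0,R]$, and its unique minimizer is $t=R$: any other minimizer would furnish a second global minimizer of $f$, contradicting uniqueness of $x^{\ast}$. Hence $\phi$ is strictly decreasing on $[0,R]$, which gives $f(\alpha d_{0}) > f(y)$ for $0 \le \alpha < \delta$ and, in particular, establishes the inequality and its strictness for every $d$ that is a nonnegative multiple of $d_{0}$.

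For a general direction I would pass to a supporting hyperplane of $f$ at $y$. Because $f$ is convex it admits a subgradient at interior points of its domain, so pick $p \in \partial f(y)$; then
\[
f(x_{0}+d) \ge f(y) + \langle p,\, d - \delta d_{0} \rangle .
\]
It therefore suffices to show $\langle p,\, d - \delta d_{0} \rangle \ge 0$ for every $\lVert d \rVert \le \delta$. Minimizing the left-hand side over the ball collapses this to the single requirement $\langle p, d_{0} \rangle \le -\lVert p \rVert$, i.e. that $p$ be a nonpositive multiple of $d_{0}$; granted this, strictness for $d \ne \delta d_{0}$ follows automatically whenever $p \ne 0$, which holds because $y \ne x^{\ast}$ forces $0 \notin \partial f(y)$.

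The reduction to $p \parallel -d_{0}$ is the step I expect to be the main obstacle, and it seems to carry the whole theorem. Exhibiting a subgradient at $y$ pointing along $-d_{0}$ is equivalent to the tangent hyperplane of the ball at $y$ supporting the sublevel set $\{ z : f(z) \le f(y) \}$, and it is not obvious that convexity together with uniqueness of the minimizer alone force this alignment: when the sublevel sets of $f$ are not balls centred at $x^{\ast}$, the supporting direction at $y$ can tilt off $d_{0}$. I would therefore budget most of the effort for this alignment, trying to extract it from the position of $y$ on $[x_{0},x^{\ast}]$ together with the strict axial monotonicity above, and I would watch carefully for whether additional structure on $f$ beyond bare convexity is silently required.
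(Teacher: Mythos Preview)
Your suspicion about the subgradient alignment is well founded, and it is fatal to the approach. Take $f(u,v)=u^{2}+2v^{2}$ with $x^{\ast}=0$, $x_{0}=(1,1)$, and $\delta=1/\sqrt{2}$. Then $y=x_{0}+\delta d_{0}=(\tfrac12,\tfrac12)$ and $\nabla f(y)=(1,2)$, which is not parallel to $d_{0}$; there is no subgradient at $y$ pointing along $-d_{0}$, so the inequality $\langle p,\,d-\delta d_{0}\rangle\ge 0$ cannot be secured for all admissible $d$. In fact this example already violates the asserted conclusion: with $d=\tfrac{1}{\sqrt{2}}\,(-1,-2)/\sqrt{5}$ one has $\lVert d\rVert=\delta$ and $f(x_{0}+d)=\tfrac{39}{10}-\sqrt{10}<\tfrac34=f(x_{0}+\delta d_{0})$. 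So the alignment you flagged as ``the main obstacle'' is not a technicality to be patched but carries the full strength of the claim, and bare convexity with a unique minimizer does not deliver it.

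For comparison, the paper's argument does not touch subgradients. After reducing to $x^{\ast}=0$ and $\lVert d\rVert=\delta$, it introduces the radially symmetric $f_{0}$ obtained by spreading the profile $\alpha\mapsto f(\alpha x_{0})$ over spheres, forms the class $[f]$ of convex functions $\ge f(0)$ agreeing with $f$ on the segment $[0,x_{0}]$, extracts via Zorn's lemma a minimal element $h\le f$, writes an arbitrary direction as $d=-\lambda U x_{0}$ for an orthogonal $U$, and then uses a convex-combination identity to squeeze out $h((1-\lambda)x_{0})\le h(x_{0}-\lambda U x_{0})$. If you pursue that route, the step to probe is the assertion that minimality of $h$ together with $f_{0}\in[f]$ forces $h((1-\lambda)y)\le h((1-\lambda)x_{0})$ for every $\lVert y\rVert=\lVert x_{0}\rVert$: a minimal element of a poset need not lie below every other element, and in the quadratic example above $f$ and $f_{0}$ are already incomparable in $[f]$.
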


\begin{proof}
Obvious for $\delta = 0$.
Suppose $\delta > 0$.
WLOG $x^{\ast} = 0$, otherwise, as this property is shift invariant, consider
\begin{equation*}
g(x) = f(x+x^{\ast}),
\end{equation*}
and $\lVert d \rVert = \delta$, by convexity the minimum over such $d$ must occur on the boundary.

Let
\begin{equation*}
f_{0}(x)
=
\begin{cases}
f\left( \lVert x \rVert \frac{ x_{0} }{\lVert x_{0} \rVert } \right) & \lVert x \rVert \le \lVert x_{0} \rVert \\
\infty & \text{else}
\end{cases},
\end{equation*}
which is convex by Lemma \ref{lemma_strictly_decreasing}.
Given two functions $h_{1}$ and $h_{2}$ let $h_{1} \sim h_{2}$ if $h_{1},h_{2}$ are convex, $h_{1},h_{2} \ge f(0)$ and $h_{1}(x) = h_{2}(x)$ for all $x = \alpha x_{0}$ and $0 \le \alpha \le 1$, e.g. $f_{0} \sim f$.
The equivalence class $[f]$ is partially ordered under the pointwise order, i.e. $h_{1} \le h_{2}$ if $h_{1}(x) \le h_{2}(x)$ for all $x$.
Moreover, if the pointwise limit of proper convex functions is proper, then it is convex.
Therefore, any chain in $[f]$ has a lower bound, i.e. the infimum of a nonincreasing net of extended real numbers bounded below converges to a limit.
Thus, by Zorn's lemma, $[f]$ has a minimal element $h$.
By definition $h \le f$ is convex and, as $f_{0} \in [f]$, for all $\lVert y \rVert = \lVert x_{0} \rVert$, $h((1-\lambda)y) \le h((1-\lambda)x_{0})$.

Let $\lambda = \delta / \lVert x_{0} \rVert$.
For any direction $d$ there exists an orthogonal matrix $U$ such that
\begin{equation*}
d = - \delta U \frac{x_{0}}{\lVert x_{0} \rVert}
= - \lambda U x_{0}.
\end{equation*}
Then
\begin{equation*}
(1-\lambda) x_{0} = (1-\lambda) x_{0} +  \lambda (I - U)x_{0} + \lambda( U - I) x_{0}
\end{equation*}
and
\begin{align*}
h((1-\lambda)x_{0})
&\le (1 - \lambda ) h( (1- \lambda) x_{0} + \lambda (I - U)x_{0}) \\
&\quad+ \lambda h((1-\lambda) x_{0} + \lambda (I - U)x_{0} + (U - I)x_{0}) \\
&= (1- \lambda) h( x_{0} - \lambda U x_{0}) + \lambda h((1-\lambda) U x_{0}) \\
&\le (1- \lambda) h( x_{0} - \lambda U x_{0}) + \lambda h((1-\lambda) x_{0}),
\end{align*}
the first inequality uses the convex combination $(1- \lambda )0 + \lambda (U - I) x_{0}$ and the second is the definition of $h$.
Rearranging, by definition of $h$,
\begin{equation*}
f((1-\lambda)x_{0})
= h((1-\lambda)x_{0})
\le h(x_{0} - \lambda U x_{0})
\le f(x_{0} - \lambda U x_{0}).
\end{equation*}
Hence
\begin{equation*}
f(x_{0} + \delta d_{0})
= f((1- \lambda) x_{0})
\le f( x_{0} - \lambda U x_{0})
= f(x_{0} + d).
\end{equation*}

For the second statement, suppose there are two minimizers, $d_{1} \ne d_{2}$ of norm $\delta$, with
\begin{equation*}
f(x_{0} + d_{1}) = f(x_{0} + d_{2}),
\end{equation*}
then, for all $0 \le \beta \le 1$, by convexity,
\begin{equation*}
x_{0} + \beta d_{1} + (1- \beta) d_{2}
\end{equation*}
is also a minimizer.
However, if $0 < \beta < 1$, this is an interior point of the ball of radius $\delta$ around $x_{0}$, a contradiction.
\end{proof}

Along this optimal direction $f$ is strictly decreasing.

\begin{lemma}
\label{lemma_strictly_decreasing}
Let $f$ be convex with a unique minimizer $x^{\ast}$, $x_{0} \ne x^{\ast}$ in $\mathrm{dom}(f)$ and
\begin{equation*}
z_{\alpha} = x_{0} + \alpha ( x^{\ast} - x_{0}).
\end{equation*}
For all $0 \le \alpha < \beta \le 1$,
\begin{equation*}
f(z_{\beta}) < f(z_{\alpha}).
\end{equation*}
\end{lemma}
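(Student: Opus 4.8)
The plan is to reduce the statement to one dimension by studying the scalar function $\phi(\alpha) := f(z_{\alpha})$ on $[0,1]$. First I would observe that $z_{\alpha} = (1-\alpha)x_{0} + \alpha x^{\ast}$ is a convex combination of $x_{0}$ and $x^{\ast}$, both of which lie in $\mathrm{dom}(f)$; since the effective domain of a convex function is convex, every $z_{\alpha} \in \mathrm{dom}(f)$ and hence $\phi$ is finite-valued. Being the composition of the convex function $f$ with the affine map $\alpha \mapsto z_{\alpha}$, the function $\phi$ is convex on $[0,1]$.

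The second ingredient is the uniqueness of the minimizer. Because $x_{0} \ne x^{\ast}$, we have $z_{\alpha} \ne x^{\ast}$ for every $\alpha \in [0,1)$, so that $\phi(\alpha) = f(z_{\alpha}) > f(x^{\ast}) = \phi(1)$ for all such $\alpha$. Thus $\phi$ is a convex function on $[0,1]$ that strictly exceeds its right-endpoint value everywhere else.

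To conclude, I would fix $0 \le \alpha < \beta \le 1$. If $\beta = 1$, then $\phi(\beta) < \phi(\alpha)$ is exactly the previous observation. Otherwise $\beta < 1$, and I would write $\beta$ as the convex combination $\beta = (1-t)\alpha + t$ with $t = (\beta - \alpha)/(1 - \alpha) \in (0,1)$. Convexity of $\phi$ then gives
\begin{equation*}
\phi(\beta) \le (1-t)\phi(\alpha) + t\,\phi(1),
\end{equation*}
and since $t > 0$ and $\phi(1) < \phi(\alpha)$, the right-hand side is strictly less than $(1-t)\phi(\alpha) + t\,\phi(\alpha) = \phi(\alpha)$. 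This yields $f(z_{\beta}) = \phi(\beta) < \phi(\alpha) = f(z_{\alpha})$, as required.

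There is no serious obstacle here; the only point requiring care is the source of the strict inequality. Ordinary convexity furnishes only the non-strict bound, so strictness must be imported from the uniqueness of the minimizer, which guarantees the positive gap $\phi(\alpha) - \phi(1) > 0$ that the coefficient $t > 0$ then transmits to $\phi(\beta)$. Treating the degenerate endpoint case $\beta = 1$ separately keeps the argument clean.
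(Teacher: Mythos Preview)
Your proof is correct and follows essentially the same approach as the paper: both write $z_{\beta}$ as the convex combination $\gamma x^{\ast} + (1-\gamma) z_{\alpha}$ with $\gamma = (\beta-\alpha)/(1-\alpha)$ (your $t$), apply convexity, and extract strictness from $f(x^{\ast}) < f(z_{\alpha})$ via uniqueness of the minimizer. Your introduction of the scalar function $\phi$ and the separate handling of $\beta = 1$ are cosmetic differences only.
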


\begin{proof}
\begin{equation*}
z_{\beta} = z_{\alpha} + (\beta - \alpha)(x^{\ast} - x_{0})
\end{equation*}
and
\begin{equation*}
x^{\ast} - z_{\alpha} = (1- \alpha)(x^{\ast} - x_{0}).
\end{equation*}
Thus, letting $\gamma = \frac{ \beta - \alpha }{ 1 - \alpha}$ with $0 < \gamma \le 1$,
\begin{equation*}
z_{\beta} = \gamma x^{\ast} + (1- \gamma) z_{\alpha}.
\end{equation*}
Hence
\begin{equation*}
f(z_{\beta})
\le \gamma f(x^{\ast}) + (1-\gamma) f(z_{\alpha})
< f(z_{\alpha}).
\end{equation*}
\end{proof}

\section{Optimization}

We consider the following optimization problem in $\mathbb{R}^{n}$:
\begin{equation*}
\inf f(x),
\end{equation*}
where $f : \mathbb{R}^{n} \rightarrow (-\infty,\infty]$ is $K$-Lipschitz over $\mathrm{dom}(f)$ with a unique minimizer and $\mathrm{dom}(f)$ is compact.

Combing the results of the previous sections, the lower convex envelope $\breve{f}$ has the same unique minimizer and any $x \in A_{f} \cap \mathrm{int}(\mathrm{dom}(f))$ has a subgradient $g_{x}$.

\subsection{Algorithm}

This section gives a formal statement of our algorithm.

\begin{algorithm}
{\rm (Directional descent)}

Stage 1:
\begin{equation*}
d^{\ast} = \arg\min_{ \lVert d \rVert \le \delta } f(x_{0} + d),  
\end{equation*}
where $x_{0} \in \mathrm{dom}(f)$ and either $x_{0} + \delta d_{0} \in A_{f}$ or $\delta \ge \lVert x^{\ast} - x_{0} \rVert$.

Stage 2:
\begin{equation*}
x_{m} = x_{0} + m \alpha \frac{ d^{\ast}}{ \delta },
\end{equation*}
for some step size $\alpha > 0$.
\end{algorithm}

If the minimizer happens to be in the $\delta$-ball of stage 1, i.e. $\delta \ge \lVert x^{\ast} - x_{0} \rVert$, then stage 2 is unnecessary.
If this is not the case, then, by Theorem \ref{theorem_optimal_direction}, $d^{\ast} = \delta d_{0}$.
While $x_{0} + \delta d_{0} \in A_{f}$ is all that is required for this property to hold, it may be more convenient for the entire ball of radius $\delta$ around $x_{0}$ to be contained in $A_{f}$, providing a convex optimization.
The convergence guarantees of stage 2 follow directly from $f$ being $K$-Lipschitz.
For any direction $d^{\ast} = \delta d$, such that $x_{m}$ remains in the effective domain,
\begin{align*}
f(x_{m}) - f(x^{\ast})
&\le K \lVert x_{m} - x^{\ast} \rVert \\
&= K \lVert m \alpha d - (x^{\ast} - x_{0}) \rVert \\
&\le K \lVert m \alpha d_{0} - (x^{\ast} - x_{0}) \rVert  + Km \alpha \lVert d - d_{0} \rVert\\
&= K \left \lvert m \alpha - \lVert x^{\ast} - x_{0} \rVert \right \rvert + Km\alpha \lVert d - d_{0} \rVert \\
&\le K \alpha + K\lVert x^{\ast} - x_{0} \rVert  \lVert d - d_{0} \rVert
\quad \left( m = \left \lfloor \frac{\lVert x^{\ast} - x_{0} \rVert}{\alpha} \right \rfloor  \right).
\end{align*}

This gives an $O( 1/\varepsilon )$ algorithm when $d$ is appropriately close to $d_{0}$.
We note that, without some local strong convexity type of structural property, the function may be arbitrarily flat.

\section{Background}

\begin{theorem}
\label{theorem_caratheodory}
{\rm (Caratheodory)}
Let $X$ be a nonempty subset of $\mathbb{R}^{n}$.
Then every vector from $\mathrm{conv}(X)$ is a convex combination of at most $n+1$ vectors from $X$.
\end{theorem}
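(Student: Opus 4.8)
The plan is to start from an arbitrary convex representation and repeatedly reduce the number of points until at most $n+1$ remain, exploiting a dimension count. Suppose $x \in \mathrm{conv}(X)$, so that $x = \sum_{k=1}^{m} \lambda_{k} x_{k}$ with $\lambda_{k} > 0$, $\sum_{k} \lambda_{k} = 1$ and $x_{k} \in X$ (we may discard any point carrying zero weight). If $m \le n+1$ there is nothing to prove, so assume $m > n+1$.

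The key observation is a dimension count: the $m-1$ difference vectors $x_{2} - x_{1}, \ldots, x_{m} - x_{1}$ all lie in $\mathbb{R}^{n}$ and $m - 1 > n$, so they are linearly dependent. Hence there exist scalars $\mu_{2}, \ldots, \mu_{m}$, not all zero, with $\sum_{k=2}^{m} \mu_{k}(x_{k} - x_{1}) = 0$. Setting $\mu_{1} = -\sum_{k=2}^{m} \mu_{k}$ produces coefficients $\mu_{1}, \ldots, \mu_{m}$, not all zero, satisfying the two relations $\sum_{k} \mu_{k} = 0$ and $\sum_{k} \mu_{k} x_{k} = 0$.

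Next I would perturb the weights along this null direction. For $t \ge 0$ consider $\lambda_{k}(t) = \lambda_{k} - t \mu_{k}$; by the two relations above one has $\sum_{k} \lambda_{k}(t) = 1$ and $\sum_{k} \lambda_{k}(t) x_{k} = x$ for every $t$, so $(\lambda_{k}(t))_{k}$ remains a candidate representation of $x$ as long as the weights stay nonnegative. Since $\sum_{k} \mu_{k} = 0$ while the $\mu_{k}$ are not all zero, at least one $\mu_{k}$ is strictly positive, so the choice
\[
t^{\ast} = \min_{k : \mu_{k} > 0} \frac{\lambda_{k}}{\mu_{k}}
\]
is well defined. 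At $t = t^{\ast}$ every weight remains nonnegative (those with $\mu_{k} \le 0$ only increase, those with $\mu_{k} > 0$ satisfy $t^{\ast} \le \lambda_{k}/\mu_{k}$) and at least one weight vanishes, yielding a convex representation of $x$ that uses strictly fewer than $m$ points of $X$.

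Iterating this reduction strictly decreases the number of points at each step, so after finitely many steps we obtain a representation with at most $n+1$ points, as claimed. The one delicate point, and the place I expect to spend the most care, is verifying that the single value $t^{\ast}$ simultaneously keeps every weight nonnegative and forces at least one of them to zero; this is exactly where the sign condition $\sum_{k} \mu_{k} = 0$, which guarantees the existence of a positive $\mu_{k}$, is essential.
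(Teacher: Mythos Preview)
Your argument is the standard reduction proof of Carath\'eodory's theorem and is correct as written; the paper itself does not give a proof but simply cites \cite{Bertsekas2009}, Proposition~1.2.1, whose proof is precisely this dimension-count-and-weight-reduction argument. So your proposal matches the referenced approach.
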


\begin{proof}
\cite{Bertsekas2009} Proposition 1.2.1.
\end{proof}

\begin{proposition}
\label{proposition_subgradient_existence}
Let $f : \mathbb{R}^{n} \rightarrow (-\infty,\infty]$ be a proper convex function.
If $x \in \mathrm{int}(\mathrm{dom}(f))$, then $\partial f(x)$ is nonempty and compact.
\end{proposition}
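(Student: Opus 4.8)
The plan is to prove nonemptiness and compactness separately, with the interiority hypothesis playing a decisive role in each. For nonemptiness I would work with the epigraph $\mathrm{epi}(f) = \{(y,t) : t \ge f(y)\}$, which is convex and nonempty since $f$ is proper and convex. As $(x,f(x))$ lies on the boundary of $\mathrm{epi}(f)$ (the point $(x,f(x)-\epsilon)$ is excluded for every $\epsilon > 0$), the supporting hyperplane theorem furnishes a nonzero pair $(a,\beta)$ with
\begin{equation*}
a'y + \beta t \ge a'x + \beta f(x) \quad \text{for all } (y,t) \in \mathrm{epi}(f).
\end{equation*}
Fixing $y = x$ and letting $t \to \infty$ forces $\beta \ge 0$. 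The crucial step is ruling out $\beta = 0$: if $\beta = 0$ then taking $t = f(y)$ gives $a'(y-x) \ge 0$ for every $y \in \mathrm{dom}(f)$, but interiority lets us choose $y = x - \epsilon a \in \mathrm{dom}(f)$ for small $\epsilon > 0$, yielding $-\epsilon \lVert a \rVert^{2} \ge 0$ and hence $a = 0$, contradicting $(a,\beta) \ne 0$. Thus $\beta > 0$, and substituting $(y,f(y))$ and dividing by $\beta$ shows that $g = -a/\beta$ satisfies $f(y) \ge f(x) + g'(y-x)$, so $g \in \partial f(x)$.

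For compactness I would argue closedness and boundedness. Closedness is immediate, since $\partial f(x)$ is the intersection over all $y$ of the closed half-spaces $\{g : g'(y-x) \le f(y) - f(x)\}$. Boundedness is where interiority enters a second time, through the local Lipschitz property of a convex function on the interior of its domain. Choosing $r > 0$ with the closed ball $B(x,r) \subset \mathrm{dom}(f)$, on which $f$ is Lipschitz with some constant $L$, any nonzero $g \in \partial f(x)$ satisfies, for the direction $d = g / \lVert g \rVert$,
\begin{equation*}
r \lVert g \rVert = g'(rd) \le f(x+rd) - f(x) \le Lr,
\end{equation*}
so $\lVert g \rVert \le L$. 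Being closed and bounded in $\mathbb{R}^{n}$, $\partial f(x)$ is therefore compact.

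I expect the main obstacle to be establishing the two ingredients I am invoking as standard: the supporting hyperplane theorem at a boundary point of a convex set, and the local Lipschitz continuity of a convex function on $\mathrm{int}(\mathrm{dom}(f))$. The former rests on a separation argument once $(x,f(x))$ is confirmed to be a boundary point of $\mathrm{epi}(f)$; the latter is typically obtained by first bounding $f$ from above near $x$ using a small simplex with vertices in the domain, then converting this local upper bound into a two-sided Lipschitz estimate by convexity. Both arguments hinge on $x$ being interior, which is precisely the hypothesis and explains why the conclusion can fail at boundary points, where the supporting hyperplane may be vertical ($\beta = 0$) and no finite subgradient exists.
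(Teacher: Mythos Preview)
Your argument is correct and is essentially the standard textbook proof: supporting hyperplane on the epigraph for existence, with interiority used to rule out a vertical support, and closedness plus a local Lipschitz bound for compactness. The paper itself does not prove this statement; it simply cites \cite{Bertsekas2009}, Proposition 5.4.1, as a background result, and your write-up is precisely the kind of argument one finds there.
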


\begin{proof}
\cite{Bertsekas2009} Proposition 5.4.1.
\end{proof}

\section{Deferred proofs}

\subsection{Proof of Lemma \ref{lemma_lower_convex_envelope}}

\begin{proof}
Let $C(x) = C(x,f)$.
Then $x \in \mathrm{conv}(\mathrm{dom}(f))$ if and only if $C(x)$ is nonempty, and $g(x)=\infty$ when $C(x)$ is empty.
Moreover, if $x \in \mathrm{dom}(f)$, then $(1,x) \in C(x)$ and $g(x) \le f(x)$.
If $h \le f$ is convex, then, for all $(\underline{\lambda},\underline{x}) \in C(x)$,
\begin{equation*}
h(x) \le \sum_{k=1}^{m} \lambda_{k} h(x_{k})
\le \sum_{k=1}^{m} \lambda_{k} f(x_{k}).
\end{equation*}
Thus $h \le g$ and it suffices to show that $g$ is convex.
To that end, let $z = \alpha x + (1-\alpha) y$ for some $x,y \in \mathrm{conv}(\mathrm{dom}(f))$ and $0 \le \alpha \le 1$.
Then
\begin{equation*}
C(z) \supset \{ (\alpha \underline{\lambda}^{(1)},\underline{x}) : (\underline{\lambda}^{(1)},\underline{x}) \in C(x) \} \oplus \{  ( (1-\alpha) \underline{\lambda}^{(2)},\underline{y}) : (\underline{\lambda}^{(2)},\underline{y}) \in C(y) \}.
\end{equation*}
Thus $g(z)$ is upper bounded by
\begin{equation*}
\inf \left \{ \sum_{i=1}^{ m_{1}} \alpha \lambda_{i}^{(1)} f(x_{i}) \! + \! \sum_{j=1}^{m_{2}} (1 \! - \! \alpha) \lambda_{j}^{(2)} f(y_{j}) : (\underline{\lambda}^{(1)},\underline{x}) \in C(x), (\underline{\lambda}^{(2)},\underline{y}) \in C(y)  \right\}.
\end{equation*}
Hence
\begin{equation*}
g(\alpha x + (1- \alpha) y) \le \alpha g(x) + (1- \alpha)g(y).
\end{equation*}
\end{proof}

\end{document}